\numberwithin{equation}{section}
\newtheorem{theorem}{Theorem}[section]
\newtheorem{definition}[theorem]{Definition}
\newtheorem{proposition}[theorem]{Proposition}
\newtheorem{lemma}[theorem]{Lemma}
\newtheorem{corollary}[theorem]{Corollary}
\theoremstyle{definition}
\title{\textbf{Universal property of skew $PBW$ extensions}}
\author{Juan Pablo Acosta
\\ Oswaldo Lezama\\
\texttt{jolezamas@unal.edu.co}
\\ Seminario de Álgebra Constructiva - SAC$^2$\\ Departamento de Matemáticas\\ Universidad Nacional de
Colombia, Sede Bogot\'a}
\date{}
\begin{document}
\maketitle
\begin{abstract}
\noindent In this paper we prove the universal property of skew $PBW$ extensions generalizing this
way the well known universal property of skew polynomial rings. For this, we will show first a
result about the existence of this class of non-commutative rings. Skew $PBW$ extensions include as
particular examples Weyl algebras, enveloping algebras of finite-dimensional Lie algebras (and its
quantization), Artamonov quantum polynomials, diffusion algebras, Manin algebra of quantum
matrices, among many others. As a corollary we will give a new short proof of the
Poincaré-Birkhoff-Witt theorem about the bases of enveloping algebras of finite-dimensional Lie
algebras.

\bigskip

\noindent \textit{Key words and phrases.} Skew polynomial rings, skew $PBW$ extensions, $PBW$
bases, quantum algebras.

\bigskip

\noindent 2010 \textit{Mathematics Subject Classification.} Primary: 16S10, 16S80. Secondary:
16S30, 16S36.
\end{abstract}
%-------------------------------------------------
%-------------------------------------------------

\section{Introduction}

Most of constructions in algebra are characterized by universal properties from which it is easy to
prove important results about the constructed object. This is the case of the universal property of
the tensor product; another well known example is the universal property for the localization of
rings and modules by multiplicative subsets. A key example in non-commutative algebra is the skew
polynomial ring $R[x;\sigma,\delta]$; the universal property in this case says that if $B$ is a
ring with a ring homomorphism $\varphi:R\to B$ and in $B$ there exists and element $y$ such that
$y\varphi(r)=\varphi(\sigma(r))y+\varphi(\delta(r))$ for every $r\in R$, then there exists an
unique ring homomorphism $\widetilde{\varphi}:R[x;\sigma,\delta]\to B$ such that
$\widetilde{\varphi}(x)=y$ and $\widetilde{\varphi}(r)=\varphi(r)$ (see \cite{McConnell}). In this
paper we prove the universal property of skew $PBW$ extensions generalizing the universal property
of skew polynomial rings. For this, we will prove first a theorem about the existence of skew $PBW$
extensions similar to the corresponding result on skew polynomial rings. As application we will get
the Poincaré-Birkhoff-Witt theorem about the bases of enveloping algebras of finite-dimensional Lie
algebras. This famous theorem says that if $K$ is a field and $\mathcal{G}$ is a finite-dimensional
Lie algebra with $K$-basis $\{y_1,\dots,y_n\}$, then a $K$-basis of the universal enveloping
algebra $\mathcal{U}(\mathcal{G})$ is the set of monomials $y_1^{\alpha_1}\cdots y_n^{\alpha_n}$,
$\alpha_i\geq 0$, $1\leq i\leq n$ (see \cite{Dixmier}, \cite{Humphreys}).

Skew $PBW$ extensions were defined firstly in \cite{LezamaGallego}, and their homological and
ring-theoretic properties have been studied in the last years (see \cite{Lezama-OreGoldie},
\cite{Chaparro}, \cite{lezamareyes1}, \cite{Venegas}). Skew polynomial rings of injective type,
Weyl algebras, enveloping algebras of finite-dimensional Lie algebras (and its quantization),
Artamonov quantum polynomials, diffusion algebras, Manin algebra of quantum matrices, are
particular examples of skew $PBW$ extensions (see \cite{lezamareyes1}). In this first section we
recall the definition of skew $PBW$ extensions and some very basic properties needed for the proof
of the main theorem.

\begin{definition}\label{gpbwextension}
Let $R$ and $A$ be rings. We say that $A$ is a \textit{skew $PBW$ extension of $R$} $($also called
a $\sigma-PBW$ extension of $R$$)$ if the following conditions hold:
\begin{enumerate}
\item[\rm (i)]$R\subseteq A$.
\item[\rm (ii)]There exist finite elements $x_1,\dots ,x_n\in A$ such $A$ is a left $R$-free module with basis
\begin{center}
${\rm Mon}(A):= \{x^{\alpha}=x_1^{\alpha_1}\cdots x_n^{\alpha_n}\mid \alpha=(\alpha_1,\dots
,\alpha_n)\in \mathbb{N}^n\}$.
\end{center}
In this case it says also that \textit{$A$ is a left polynomial ring over $R$} with respect to
$\{x_1,\dots,x_n\}$ and $Mon(A)$ is the set of standard monomials of $A$. Moreover, $x_1^0\cdots
x_n^0:=1\in Mon(A)$.
\item[\rm (iii)]For every $1\leq i\leq n$ and $r\in R-\{0\}$ there exists $c_{i,r}\in R-\{0\}$ such that
\begin{equation}\label{sigmadefinicion1}
x_ir-c_{i,r}x_i\in R.
\end{equation}
\item[\rm (iv)]For every $1\leq i,j\leq n$ there exists $c_{i,j}\in R-\{0\}$ such that
\begin{equation}\label{sigmadefinicion2}
x_jx_i-c_{i,j}x_ix_j\in R+Rx_1+\cdots +Rx_n.
\end{equation}
Under these conditions we will write $A:=\sigma(R)\langle x_1,\dots ,x_n\rangle$.
\end{enumerate}
\end{definition}
The following proposition justifies the notation and the alternative name given for the skew $PBW$
extensions.
\begin{proposition}\label{sigmadefinition}
Let $A$ be a skew $PBW$ extension of $R$. Then, for every $1\leq i\leq n$, there exists an
injective ring endomorphism $\sigma_i:R\rightarrow R$ and a $\sigma_i$-derivation
$\delta_i:R\rightarrow R$ such that
\begin{center}
$x_ir=\sigma_i(r)x_i+\delta_i(r)$,
\end{center}
for each $r\in R$.
\end{proposition}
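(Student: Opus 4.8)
The plan is to extract $\sigma_i$ and $\delta_i$ directly from condition (iii) of Definition~\ref{gpbwextension} and then verify the required algebraic identities using the uniqueness of coordinates coming from the $R$-freeness of $A$ (condition (ii)). First I would fix $i$ and, for each $r\in R$, define $\sigma_i(r)$ and $\delta_i(r)$ by the decomposition $x_ir=\sigma_i(r)x_i+\delta_i(r)$. For $r\neq 0$, condition (iii) gives an element $c_{i,r}\in R-\{0\}$ with $x_ir-c_{i,r}x_i\in R$, so I set $\sigma_i(r):=c_{i,r}$ and let $\delta_i(r):=x_ir-c_{i,r}x_i\in R$; for $r=0$ I set $\sigma_i(0):=0$ and $\delta_i(0):=0$. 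The crucial point that makes these maps well defined is that $\{1,x_1,\dots,x_n,\dots\}={\rm Mon}(A)$ is a left $R$-basis of $A$: since the element $x_ir$ has a unique expression as a left $R$-linear combination of standard monomials, the coefficient of $x_i$ and the constant term are uniquely determined, so $\sigma_i(r)$ and $\delta_i(r)$ do not depend on the choice of $c_{i,r}$.

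Next I would check that $\sigma_i$ is a ring endomorphism and $\delta_i$ a $\sigma_i$-derivation by comparing coordinates. For additivity, I would write $x_i(r+s)=\sigma_i(r+s)x_i+\delta_i(r+s)$ and also $x_ir+x_is=(\sigma_i(r)+\sigma_i(s))x_i+(\delta_i(r)+\delta_i(s))$, then equate the (unique) coefficients of $x_i$ and of $1$ to get $\sigma_i(r+s)=\sigma_i(r)+\sigma_i(s)$ and $\delta_i(r+s)=\delta_i(r)+\delta_i(s)$. For multiplicativity I would expand $x_i(rs)$ two ways: on one hand it equals $\sigma_i(rs)x_i+\delta_i(rs)$, and on the other hand
\begin{center}
$x_i(rs)=(x_ir)s=(\sigma_i(r)x_i+\delta_i(r))s=\sigma_i(r)(x_is)+\delta_i(r)s=\sigma_i(r)\sigma_i(s)x_i+(\sigma_i(r)\delta_i(s)+\delta_i(r)s)$.
\end{center}
Comparing coefficients of $x_i$ yields $\sigma_i(rs)=\sigma_i(r)\sigma_i(s)$, and comparing constant terms yields exactly the twisted Leibniz rule $\delta_i(rs)=\sigma_i(r)\delta_i(s)+\delta_i(r)s$, which is the definition of a $\sigma_i$-derivation. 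To see that $\sigma_i(1)=1$, apply the defining relation to $r=1$: $x_i=x_i\cdot 1=\sigma_i(1)x_i+\delta_i(1)$, and uniqueness of coordinates forces $\sigma_i(1)=1$ and $\delta_i(1)=0$.

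Finally I would prove that $\sigma_i$ is injective. Suppose $\sigma_i(r)=0$; I must show $r=0$. If $r\neq 0$, then $\sigma_i(r)=c_{i,r}\in R-\{0\}$ by condition (iii), contradicting $\sigma_i(r)=0$; hence $r=0$, so $\sigma_i$ is injective. I expect the main (though still mild) obstacle to be the bookkeeping around the case $r=0$ and the insistence in condition (iii) that $c_{i,r}$ be nonzero: one must notice that it is precisely this nonvanishing, together with the uniqueness of the basis expansion, that simultaneously makes $\sigma_i$ well defined and injective, rather than merely additive and multiplicative. Everything else reduces to equating coordinates in the free left $R$-module $A$, which is routine once the well-definedness is secured.
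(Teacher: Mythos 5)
Your proof is correct and follows the standard route: the paper itself gives no argument here, deferring to \cite{LezamaGallego}, Proposition~3, and the proof there is essentially yours — read off $\sigma_i(r)$ and $\delta_i(r)$ as the (unique) coordinates of $x_ir$ with respect to the left $R$-basis ${\rm Mon}(A)$, then derive additivity, multiplicativity, the twisted Leibniz rule and $\sigma_i(1)=1$ by comparing coordinates in $x_i(r+s)$ and $x_i(rs)=(x_ir)s$, with injectivity coming from the nonvanishing of $c_{i,r}$ in condition (iii). Your handling of the $r=0$ case and the reduction of injectivity to triviality of the kernel (valid since $\sigma_i$ is additive) are both sound.
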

\begin{proof}
See \cite{LezamaGallego}, Proposition 3.
\end{proof}

Observe that if $\sigma$ is an injective endomorphism of the ring $R$ and $\delta$ is a
$\sigma$-derivation, then the skew polynomial ring $R[x;\sigma,\delta]$ is a trivial skew $PBW$
extension in only one variable, $\sigma(R)\langle x\rangle$.

Some extra notation will be used in the rest of the paper.

\begin{definition}\label{1.1.6}
Let $A$ be a skew $PBW$ extension of $R$ with endomorphisms $\sigma_i$, $1\leq i\leq n$, as in
Proposition \ref{sigmadefinition}.
\begin{enumerate}
\item[\rm (i)]For $\alpha=(\alpha_1,\dots,\alpha_n)\in \mathbb{N}^n$,
$\sigma^{\alpha}:=\sigma_1^{\alpha_1}\cdots \sigma_n^{\alpha_n}$,
$|\alpha|:=\alpha_1+\cdots+\alpha_n$. If $\beta=(\beta_1,\dots,\beta_n)\in \mathbb{N}^n$, then
$\alpha+\beta:=(\alpha_1+\beta_1,\dots,\alpha_n+\beta_n)$.
\item[\rm (ii)]For $X=x^{\alpha}\in {\rm Mon}(A)$,
$\exp(X):=\alpha$ and $\deg(X):=|\alpha|$.
\item[\rm (iii)]If $f=c_1X_1+\cdots +c_tX_t$,
with $X_i\in Mon(A)$ and $c_i\in R-\{0\}$, then $\deg(f):=\max\{\deg(X_i)\}_{i=1}^t.$
\end{enumerate}
\end{definition}
The skew $PBW$ extensions can be characterized in a similar way as was done in
\cite{Gomez-Torrecillas2} for $PBW$ rings.
\begin{theorem}\label{coefficientes}
Let $A$ be a left polynomial ring over $R$ w.r.t. $\{x_1,\dots,x_n\}$. $A$ is a skew $PBW$
extension of $R$ if and only if the following conditions hold:
\begin{enumerate}
\item[\rm (a)]For every $x^{\alpha}\in {\rm Mon}(A)$ and every $0\neq
r\in R$ there exist unique elements $r_{\alpha}:=\sigma^{\alpha}(r)\in R-\{0\}$ and $p_{\alpha
,r}\in A$ such that
\begin{equation}\label{611}
x^{\alpha}r=r_{\alpha}x^{\alpha}+p_{\alpha , r},
\end{equation}
where $p_{\alpha ,r}=0$ or $\deg(p_{\alpha ,r})<|\alpha|$ if $p_{\alpha , r}\neq 0$. Moreover, if
$r$ is left invertible, then $r_\alpha$ is left invertible.

\item[\rm (b)]For every $x^{\alpha},x^{\beta}\in {\rm Mon}(A)$ there
exist unique elements $c_{\alpha,\beta}\in R$ and $p_{\alpha,\beta}\in A$ such that
\begin{equation}\label{612}
x^{\alpha}x^{\beta}=c_{\alpha,\beta}x^{\alpha+\beta}+p_{\alpha,\beta},
\end{equation}
where $c_{\alpha,\beta}$ is left invertible, $p_{\alpha,\beta}=0$ or
$\deg(p_{\alpha,\beta})<|\alpha+\beta|$ if $p_{\alpha,\beta}\neq 0$.
\end{enumerate}
\begin{proof}
See \cite{LezamaGallego}, Theorem 7.
\end{proof}
\end{theorem}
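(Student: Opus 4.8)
This is an equivalence, so the plan is to prove the two implications separately, freely using Proposition \ref{sigmadefinition} to translate between the defining conditions (iii)--(iv) of Definition \ref{gpbwextension} and the commutation rules $x_ir=\sigma_i(r)x_i+\delta_i(r)$. For the forward implication (a skew $PBW$ extension satisfies (a) and (b)), the technical heart is a lemma on left multiplication by a variable: for every $1\le i\le n$ and every standard monomial $x^{\gamma}$, one has $x_ix^{\gamma}=c\,x^{\gamma+e_i}+q$, where $e_i$ is the $i$-th canonical vector, $c\in R$ is left invertible, and $q=0$ or $\deg(q)\le|\gamma|$. I would prove this by induction on $|\gamma|$: if every variable occurring in $x^{\gamma}$ has index $\ge i$, the product is already the standard monomial $x^{\gamma+e_i}$; otherwise I write $x^{\gamma}=x_kx^{\gamma-e_k}$ for the least present index $k<i$, rewrite $x_ix_k=c_{k,i}x_kx_i+(\deg\le 1)$ via (iv), push scalars across $x_k$ using $x_kr=\sigma_k(r)x_k+\delta_k(r)$, and apply the induction hypothesis to $x_ix^{\gamma-e_k}$. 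A degree count shows every correction term lands in degree $\le|\gamma|$, while the surviving top monomial $x^{\gamma+e_i}$ picks up the coefficient $c_{k,i}\sigma_k(c')$, a product of left-invertible elements (left invertibility being preserved by the ring endomorphism $\sigma_k$).

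Iterating this lemma, both (a) and (b) then follow by induction. For (a) I peel the leftmost variable off $x^{\alpha}$ and use $x_ir=\sigma_i(r)x_i+\delta_i(r)$ to get $r_{\alpha}=\sigma^{\alpha}(r)$, which is nonzero since the $\sigma_i$ are injective and left invertible whenever $r$ is, because $\sigma^{\alpha}$ is a ring homomorphism; the degree lemma bounds the remainder $x_ip'$ by $\deg(p')+1<|\alpha|$. For (b) the reordering of $x^{\alpha}x^{\beta}$ into standard form produces $c_{\alpha,\beta}$ as a product of $\sigma$-images of elementary coefficients $c_{i,j}$, with remainder of degree $<|\alpha+\beta|$. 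Uniqueness of $r_{\alpha},p_{\alpha,r}$ and of $c_{\alpha,\beta},p_{\alpha,\beta}$ is immediate from the freeness of ${\rm Mon}(A)$ together with the degree bounds, since $x^{\alpha}$ (respectively $x^{\alpha+\beta}$) is the unique top-degree basis vector occurring.

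The one step that is not pure bookkeeping is the left invertibility of the $c_{i,j}$, which I would obtain by playing the two reordering relations against each other: for $i<j$, (iv) supplies both $x_jx_i=c_{i,j}x_ix_j+s_1$ and $x_ix_j=c_{j,i}x_jx_i+s_2$ with $s_1,s_2\in R+Rx_1+\cdots+Rx_n$. Substituting the first into the second gives $x_ix_j=c_{j,i}c_{i,j}x_ix_j+(\deg\le 1)$, and since $x_ix_j$ is a standard monomial of degree $2$, comparing its coefficient in the free module forces $c_{j,i}c_{i,j}=1$. Hence $c_{i,j}$ is left invertible, and the products appearing in the lemma and in $c_{\alpha,\beta}$ are left invertible as required. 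For the \emph{converse} I assume (a) and (b); conditions (i)--(ii) hold because $A$ is by hypothesis a left polynomial ring over $R$, so only (iii)--(iv) remain. Specializing (a) to $\alpha=e_i$ yields $x_ir=r_{e_i}x_i+p_{e_i,r}$ with $p_{e_i,r}\in R$, so $c_{i,r}:=r_{e_i}\in R-\{0\}$ witnesses (iii). Specializing (b) to $\alpha=e_j,\beta=e_i$ with $i<j$ gives $x_jx_i=c_{e_j,e_i}x_ix_j+p$ with $p\in R+Rx_1+\cdots+Rx_n$ and $c_{e_j,e_i}$ left invertible (hence nonzero), which is exactly (iv) with $c_{i,j}:=c_{e_j,e_i}$; for the ordered pairs with $j<i$ one instead reads off $x_ix_j=c_{e_i,e_j}x_jx_i+p$ and takes $c_{i,j}$ to be a left inverse of $c_{e_i,e_j}$, which is nonzero and again places $x_jx_i-c_{i,j}x_ix_j$ in degree $\le 1$.

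I expect the main obstacle to be the degree control in the forward direction under the nested, non-commutative reorderings: making the induction genuinely well-founded (on total degree, and within a fixed degree on the number of out-of-order adjacencies) and checking that every correction term drops strictly below the leading degree. The left invertibility of the leading coefficients is conceptually the subtlest point, but the substitution argument above disposes of it in a few lines; the remaining effort is the careful monomial bookkeeping in the lemma and in the two inductions for (a) and (b).
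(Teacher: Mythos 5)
Your argument is correct. Note that the paper does not actually prove Theorem \ref{coefficientes}; it simply cites Theorem 7 of \cite{LezamaGallego}, and your proposal is essentially the standard argument given there: a leading-coefficient lemma for $x_i x^{\gamma}$ proved by induction on degree, iterated to get (a) and (b), with uniqueness from freeness of ${\rm Mon}(A)$ and the converse obtained by specializing to $\alpha=e_i$ and $(\alpha,\beta)=(e_j,e_i)$. Your treatment of the one genuinely non-routine point --- extracting $c_{j,i}c_{i,j}=1$ by substituting the two reordering relations into each other and comparing the coefficient of the standard monomial $x_ix_j$ --- is exactly the reasoning the paper itself uses at the start of Section 2 to show the parameters are uniquely determined, so the proposal is consistent with the paper throughout.
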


\section {Existence theorem for skew PBW extensions}

If $A=\sigma(R)\langle x_1,\dots,x_n\rangle$ is a skew $PBW$ extension of the ring $R$, then as was
observed in the previous section, $A$ induces unique endomorphisms $\sigma_i:R\to R$ and
$\sigma_i$-derivations $\delta_i:R\to R$, $1\leq i\leq n$. Moreover, by (\ref{sigmadefinicion2}),
there exist $c_{ij}, d_{ij}, a_{ij}^{(k)}\in R$ such that
$x_jx_i=c_{ij}x_ix_j+a_{ij}^{(1)}x_1+\cdots+a_{ij}^{(n)}x_n+d_{ij}$, with $1\leq i,j\leq n$.
However, note that if $i<j$, since $Mon(A)$ is a $R$-basis, then $1=c_{j,i}c_{i,j}$, i.e., for
every $1\leq i<j\leq n$, $c_{ji}$ is a right inverse of $c_{i,j}$ univocally determined. In a
similar way, we can check that $a_{ji}^{(k)}=-c_{ji}a_{ij}^{(k)}$, $d_{ji}=-c_{ji}d_{ij}$. Thus,
given $A$ there exist unique parameters $c_{ij}, d_{ij}, a_{ij}^{(k)}\in R$ such that
\begin{equation}
x_jx_i=c_{ij}x_ix_j+a_{ij}^{(1)}x_1+\cdots+a_{ij}^{(n)}x_n+d_{ij}, \ \text{for every}\  1\leq
i<j\leq n.
\end{equation}

\begin{definition}
Let $A=\sigma(R)\langle x_1,\dots,x_n\rangle$ be a skew $PBW$ extension. $\sigma_i,\delta_i,
c_{ij}$, $d_{ij}, a_{ij}^{(k)}$, $1\leq i<j\leq n$, defined as before, are called the parameters of
$A$.
\end{definition}

Conversely, given a ring $R$ and parameters $\sigma_i,\delta_i, c_{ij}$, $d_{ij}, a_{ij}^{(k)}$,
$1\leq i<j\leq n$, we will construct in this section a skew $PBW$ extension with coefficient ring
$R$ and satisfying the following equations
\begin{enumerate}
  \item For $i<j$ in $I$ and $k$ in $I$,
  $x_{j}x_{i}=c_{ij}x_{i}x_{j}+ \Sigma_{k}a_{ij}^{(k)} x_{k}+d_{ij}$,
  \item For $i\in I$ and $r\in R$, $x_{i}r=\sigma_{i}(r)x_{i}+\delta_i(r)$,
 \end{enumerate}
where $I:=\{1,\dots,n\}$.

\begin{definition}\label{complejidad}
 Let $R$ be a ring and $W$ be the free monoid in the alphabet $X\cup R$, with $X:=\{x_{i}:i\in I\}$.
 Let $w$ be a word of $W$, the complexity of $w$, denoted $c(w)$, is a triple of nonnegative integers $(a,b,c)$, where $a$ is
the number of $x$'s in $w$, $b$ is is the number of inversions involving only $x$'s, and $c$ is the
number of inversions of the type $(x_i,r)$.
%$a:=card\{s: w_{s}=x_{i} \}$ is the number of $x_i$ in $w$,
%$b:=card\{(s,r):s<r, x_{j}=w(s), x_{i}=w_{r}, i<j\}$ is the number of inversions involving only $x$
%and $c:=card\{(s,r): s<r w_{s}=x_{i}, w_{r}\in R\}$ is the number of inversions of the type $(x_i,r)$.
\end{definition}
These triples are ordered with the lexicographic order, i.e., $(a,b,c)\leq(d,e,f)$ if and only if
$a<d$, or, $a=d$ and $b<e$, or, $a=d$, $b=e$ and $c\leq f$. This is a well order. Let $T$ be the
set of elements of $W$ such that $c(w)=(a,0,0)$ and $\mathbb{Z}T$ be the linear extension of $T$ in
$\mathbb{Z}\langle X\cup R\rangle$ (the $\mathbb{Z}$-free algebra in the alphabet $X\cup R$).

\begin{definition}\label{funcion-reduccion}
 Let $R$ be a ring, $\{c_{ij}\}_{i<j}$, $\{d_{ij}\}_{i<j}$ and
 $\{a_{ij}^{(k)}\}_{i<j,k}$ be elements of $R$  indexed by $i,j,k$ in $I$. Let $\sigma_{i},\delta_{i}:R
 \rightarrow R$ be two functions for each  $i\in I$. Suppose that $c_{ij}$ is left invertible and that
$\sigma_i(r)\neq 0$ for $r\neq 0$.  We define the function $p$
\begin{center}
 $p:W \rightarrow \mathbb{Z}\langle X\cup R\rangle$, with $X:=\{x_{i}:i\in I\}$,
\end{center}
 by induction in the complexity, as follows:
 \begin{enumerate}
 \item If $w\in T$ then $p(w)=w$.
 \item If $w=v_{1}x_{i}rv_{2}$, with $r\in R$, $v_1\in W$ and $rv_2\in T$ then
 \begin{center}
 $p(w)=p(v_{1}\sigma_{i}(r)x_{i}v_{2})+p(v_{1}\delta_{i}(r)v_{2})$.
 \end{center}
 \item If $w=v_{1}x_{j}x_{i}v_{2}$, where $v_1\in W$, $x_{i}v_{2}\in T$ with $i<j$, then
 \begin{center}
 $p(w)=p(v_{1}c_{ij}x_{i}x_{j}v_{2})+\Sigma_{k}p(v_{1}a_{ij}^{(k)}x_{k}v_{2})+p(v_{1}d_{ij}v_{2})$.
 \end{center}
\end{enumerate}
The linear extension of $p$ to $\mathbb{Z}\langle X\cup R\rangle\rightarrow \mathbb{Z}\langle X\cup
R\rangle$ is also denoted $p$. The image of $p$ is contained in $\mathbb{Z}T$. Let
$Mon:=\{\Pi_{k=1}^{n} x_{i_{k}}: i_{1}\leq \dots \leq i_{n}, n\geq0\}$, and $F_{R}(Mon)$ be the
left free $R-$module with basis $Mon$. We define $q:\mathbb{Z}T\rightarrow F_{R}(Mon)$ as the
bilinear extension of $q(r_{1}\dots r_{m}x_{i_{1}}\dots x_{i_{n}}):=
(\Pi_{k=1}^{m}r_{k})x_{i_{1}}\dots x_{i_{n}}$. Finally, we define $h:\mathbb{Z}\langle X\cup
R\rangle\rightarrow F_{R}(Mon)$ as $h:=qp$.
\end{definition}

\begin{theorem}[Existence]\label{pbw}
Let $R,I,X,a_{ij}^{k},c_{ij},\sigma_{i},\delta_{i},h,p,q$ be as in Definition
\ref{funcion-reduccion}. Then, there exists a skew $PBW$ extension $A$ of $R$ with variables
$X:=\{x_{i}:i\in I\}$ such that
 \begin{enumerate}
 \item[\rm (a)] $x_{i}r=\sigma_{i}(r)x_{i}+\delta_{i}(r)$.
  \item[\rm (b)] $x_{j}x_{i}=c_{ij}x_{i}x_{j}+\Sigma_{k}a_{ij}^{(k)}x_{k}+d_{ij}$, for $i<j$ in $I$.
 \end{enumerate}
if and only if
\begin{enumerate}
 \item[\rm (1)]For
 every $i$ in $I$, $\sigma_{i}$ is a ring endomorphism of $R$ and $\delta_{i}$ is $\sigma_{i}$-derivation.
\item[\rm (2)]$h(x_{j}x_{i}r)=h(p(x_{j}x_{i})r)$, for $i<j$ in $I$ and $r\in R$.
\item[\rm (3)]$h(x_{k}x_{j}x_{i})=h(p(x_{k}x_{j})x_{i})$, for $i<j<k$ in $I$.
\end{enumerate}
\end{theorem}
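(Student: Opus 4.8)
The plan is to read the whole statement as a statement about a terminating rewriting system in the spirit of the diamond lemma. The function $p$ rewrites an arbitrary word of $W$ by applying the intended relations (a) and (b) to the rightmost redex, the well order on complexities from Definition \ref{complejidad} guarantees that this process terminates (each recursive call in Definition \ref{funcion-reduccion} strictly drops the complexity), and $q$ then collapses the $R$-syllables of a standard word into a single left coefficient. Thus $h=qp$ is the candidate normal-form map onto $F_{R}(Mon)$, and the theorem asserts that $h$ is compatible with the intended multiplication exactly when the critical overlaps are resolved, which is the content of (1), (2) and (3). I would prove the two implications separately.

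For the forward implication I would assume $A=\sigma(R)\langle x_1,\dots,x_n\rangle$ satisfies (a) and (b) and introduce the evaluation homomorphism $\varepsilon:\mathbb{Z}\langle X\cup R\rangle\to A$ determined by $x_i\mapsto x_i$ and $r\mapsto r$, which exists by freeness. A short induction on complexity, using precisely (a) and (b) at each reduction step, shows $\varepsilon\circ p=\varepsilon$; since ${\rm Mon}(A)=Mon$ is an $R$-basis of $A$ there is a left $R$-module isomorphism $\Phi:F_{R}(Mon)\to A$ with $\varepsilon|_{\mathbb{Z}T}=\Phi\circ q$, whence $\varepsilon=\Phi\circ h$ on all of $\mathbb{Z}\langle X\cup R\rangle$. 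Condition (1) is then Proposition \ref{sigmadefinition} applied to $A$, the given $\sigma_i,\delta_i$ coinciding with the induced ones because (a) determines them on the basis. For (2) I would apply $\Phi$ to both sides: $\Phi h(x_jx_ir)=\varepsilon(x_jx_ir)=x_jx_ir$, while $\Phi h(p(x_jx_i)r)=\varepsilon(p(x_jx_i)r)=\varepsilon(p(x_jx_i))\varepsilon(r)=\varepsilon(x_jx_i)r=x_jx_ir$, using that $\varepsilon$ is multiplicative and $\varepsilon\circ p=\varepsilon$; injectivity of $\Phi$ gives the equality, and (3) is identical with $x_kx_jx_i$.

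For the converse I would define $A:=F_{R}(Mon)$ as a left $R$-module and equip it with the product $f\ast g:=h(\iota(f)\iota(g))$, where $\iota:F_{R}(Mon)\hookrightarrow\mathbb{Z}\langle X\cup R\rangle$ sends $\sum_\alpha r_\alpha x^\alpha$ to the corresponding combination of standard words. Since $h\circ\iota=\mathrm{id}$, the monomial $1=x^0$ is a two-sided identity and $r\mapsto r\cdot x^0$ is an injective ring homomorphism $R\to A$, giving Definition \ref{gpbwextension}(i)--(ii) at once. Evaluating $h$ on $x_ir$ and on $x_jx_i$ yields relations (a) and (b); from (a) one reads off Definition \ref{gpbwextension}(iii) with $c_{i,r}=\sigma_i(r)\neq0$ (this is where the standing hypothesis $\sigma_i(r)\neq0$ for $r\neq0$ is used), and from (b) one obtains (iv) for $i<j$ directly, for $i=j$ trivially, and for $i>j$ by taking $c_{i,j}$ to be a left inverse of $c_{ji}$ (this is where left invertibility of the parameters enters). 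Hence, once $\ast$ is known to be associative, $A$ is a skew $PBW$ extension with the prescribed parameters.

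The main obstacle, and the only substantial point, is associativity of $\ast$. I would isolate it in the key lemma that $h(\iota(h(w))\,w')=h(ww')$ and $h(w'\,\iota(h(w)))=h(w'w)$ for all $w,w'\in\mathbb{Z}\langle X\cup R\rangle$, i.e.\ a subword may be normalized before being multiplied; granting this, both $(f\ast g)\ast l$ and $f\ast(g\ast l)$ collapse to $h(\iota(f)\iota(g)\iota(l))$ and associativity follows formally. The key lemma is proved by Noetherian induction on the complexity of $ww'$, and the inductive step forces a comparison of the two reductions of a minimal overlap. The only overlaps between the length-two redexes $x_ir$ and $x_jx_i$ are $x_irs$, $x_jx_ir$ with $i<j$, and $x_kx_jx_i$ with $i<j<k$; their resolutions are exactly hypotheses (1) (the endomorphism and $\sigma_i$-derivation identities, which also carry the additivity needed for $x_i(rs)$ and $x_i(r+s)$), (2) and (3), while every remaining case reduces to the associativity of $R$ and to the induction hypothesis. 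This is precisely the diamond-lemma mechanism, local confluence at the critical pairs plus termination yielding global confluence, and the delicate work lies in checking that (1)--(3) are exactly those critical pairs and in the bookkeeping that keeps the recursive calls strictly decreasing in complexity.
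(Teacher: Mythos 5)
Your proposal is correct and follows essentially the same route as the paper: the forward direction is the paper's appeal to Proposition \ref{sigmadefinition} and associativity in $A$ (which you merely make explicit via the evaluation map $\varepsilon$ and the $R$-module isomorphism $F_R(Mon)\cong A$), and the converse is exactly the paper's construction of the product $f\star g=h(t(f)t(g))$ on $F_R(Mon)$, with your key lemma $h(\iota(h(w))w')=h(ww')$ being the combination of the paper's Propositions \ref{q-modulo-h} and \ref{independencia}, proved likewise by induction on complexity with the critical overlaps $x_irs$, $x_jx_ir$ and $x_kx_jx_i$ resolved by hypotheses (1)--(3). No gap; the diamond-lemma framing is just a more explicit description of the mechanism already used in the paper.
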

\begin{proof}
$\Rightarrow)$: Numeral (1) is the content of Proposition \ref{sigmadefinition}. Conditions (2) and
(3) follow from (a) and (b) and the associativity $x_j(x_ir)=(x_jx_i)r$ and
$x_k(x_jx_i)=(x_kx_j)x_i$.

$\Leftarrow)$: Define $t:F_R(Mon)\rightarrow \mathbb{Z}\langle X\cup R\rangle$ as $t(\Sigma
r_{\bar{x}}\bar{x}):=\Sigma r_{\bar{x}}\bar{x}\in \mathbb{Z}\langle X\cup R\rangle$, where $\Sigma
r_{\bar{x}}\bar{x}$ is the unique expression of an element in $F_R(Mon)$ as a sum over a finite
set, $\bar{x}\in Mon$ and $r_{\bar{x}}\neq 0$ is an element of $R$.

We define a product in $F_{R}(Mon)$ by
\begin{center}
$f\star g=h(t(f)t(g))$, $f,g\in F_{R}(Mon)$,
\end{center}
and we will prove in Lemma \ref{h-es-homo} below that $h(ab)=h(a)\star h(b)$, with
$a,b\in\mathbb{Z}\langle X\cup R\rangle$. From this we get that $h:\mathbb{Z}\langle X\cup
R\rangle\rightarrow F_R(Mon)$ is a surjection that preserves sums, products and $h(1)=1$. This
makes $F_R(Mon)$ a ring, which is a skew $PBW$ extension of $R$ by the definition of the product
$\star$.

To complete the proof we proceed to prove Lemma \ref{h-es-homo}, but for this, we have to show
first some preliminary propositions under the hypothesis (1)-(3).
\end{proof}

\begin{proposition}\label{suma-y-producto-de-escalares}
For $a,b\in W$ and $r,s\in R$ the following equalities hold:
\begin{enumerate}
\item[\rm (i)]$h(a0b)=0$.
\item[\rm (ii)]$h(a(-r)b)=-h(arb)$.
\item[\rm (iii)]$h(a(r+s)b)=h(arb+asb)$.
\item[\rm (iv)]$h(a1b)=h(ab)$.
\item[\rm (v)]$h(a(rs)b)=h(arsb)$.
\end{enumerate}
\end{proposition}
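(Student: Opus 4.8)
The plan is to prove each of (i)--(v) by induction on the complexity $c(w)$ of words, exploiting the recursive definition of $p$ together with the linearity of both $p$ and $q$ (hence of $h=qp$). The key preliminary observation is that the maps $p$ and $q$ are defined so that $h$ is $\mathbb{Z}$-linear on $\mathbb{Z}\langle X\cup R\rangle$; thus for a fixed word-context $a\,\square\,b$ the quantity $h(a\,\square\,b)$ depends $\mathbb{Z}$-linearly on whatever coefficient or sum of words occupies the slot $\square$, once we are allowed to push $p$ through. The substance of the proposition is that the $R$-arithmetic inside a word (adding scalars, negating, multiplying scalars, inserting $1$) is respected by $h$ even though $p$ only manipulates scalars through the fixed rules (2) and (3) and the linear extension.

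First I would handle (i), that $h(a0b)=0$. I would induct on $c(a0b)$: if $a0b\in T$ this cannot quite happen since the slot carries the scalar $0\in R$, so I would instead track how the reduction rules treat the distinguished letter in the slot. The cleanest route is to note that $0\in R$ so the factor $r=0$ appears as a scalar; applying rule (2) of Definition~\ref{funcion-reduccion} at the position of that scalar (and rule (1) when the tail is already in $T$), the scalar $0$ is carried multiplicatively by $\sigma_i$ and additively by $\delta_i$, and by hypothesis (1) these are additive so $\sigma_i(0)=0$ and $\delta_i(0)=0$; hence every branch of the recursion reproduces a word still carrying a $0$, and at the base $q$ sends any standard expression with a zero scalar factor to $0$ in $F_R(Mon)$. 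The induction on complexity closes because each reduction strictly decreases $c(w)$.

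Then (ii) and (iii) follow the same pattern: I would induct on complexity and in each case apply the single reduction step dictated by the rightmost reducible configuration. Because $\sigma_i$ is additive and $\delta_i$ is additive (again hypothesis (1)), the equalities $\sigma_i(r+s)=\sigma_i(r)+\sigma_i(s)$, $\sigma_i(-r)=-\sigma_i(r)$ and likewise for $\delta_i$ let me match the left and right sides after one reduction and invoke the inductive hypothesis on the resulting words of smaller complexity, using the $\mathbb{Z}$-linearity of $p$ (and of $q$) to split sums. For (iv) I would observe that $1\in R$ with $\sigma_i(1)=1$ and $\delta_i(1)=0$ (these are part of (1), since $\sigma_i$ is a ring endomorphism and $\delta_i$ a $\sigma_i$-derivation), so reducing past the inserted $1$ reproduces exactly the word $ab$ up to a scalar $1$ that $q$ absorbs; again induction on complexity finishes. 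For (v), $h(a(rs)b)=h(arsb)$, the point is that when $rs$ is treated as a single scalar versus $r$ and $s$ as two adjacent scalars, the reduction rule (2) applied repeatedly uses the multiplicativity $\sigma_i(rs)=\sigma_i(r)\sigma_i(s)$ and the $\sigma_i$-derivation rule $\delta_i(rs)=\sigma_i(r)\delta_i(s)+\delta_i(r)s$; comparing the two reductions step by step and folding in (ii)--(iv) already proved gives the equality.

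The main obstacle I anticipate is the bookkeeping in (v): unlike the additive identities, the multiplicative/derivation identities produce \emph{two} terms when $x_i$ passes a product $rs$, and one must verify that grouping $rs$ as one letter yields, after reduction, the same element of $F_R(Mon)$ as grouping them as two letters $r,s$. This requires a careful simultaneous induction that tracks how the scalar is commuted leftward past each $x_i$, and it is where the earlier parts (i)--(iv) must be used as lemmas to rearrange and cancel the intermediate sums. A subtlety is that the reductions in (2) and (3) are only defined when the suffix lies in $T$, so I must ensure the inductive hypothesis is always applied to words whose relevant suffix has already been put in $T$-form; keeping the reduction anchored at the rightmost non-$T$ configuration guarantees this and keeps the complexity strictly decreasing at every step.
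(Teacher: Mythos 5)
Your proposal is correct and follows essentially the same route as the paper: induction on the complexity $c(w)$, matching the reduction steps of $p$ on both sides and invoking additivity of $\sigma_i,\delta_i$ for (iii), the identities $\sigma_i(1)=1$, $\delta_i(1)=0$ for (iv), and multiplicativity of $\sigma_i$ together with the $\sigma_i$-derivation rule (plus the earlier parts) for (v). The only cosmetic difference is that the paper obtains (i) and (ii) formally from (iii), since (iii) says $r\mapsto h(arb)$ is an additive group homomorphism, whereas you prove (i) and (ii) by separate direct inductions; both are fine.
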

\begin{proof}
(i) and (ii) follow from (iii) since $r\mapsto h(arb)$ is a group homomorphism from the additive
group of $R$ into $F_{R}(Mon)$.

(iii) is proven by induction on $c(a(r+s)b)$ and applying the definition of $h$. Here the
conditions $\delta_i(a+b)=\delta_i(a)+\delta_i(b)$ and $\sigma_i(a+b)=\sigma_i(a)+\sigma_i(b)$ in
the hipothesis (1) of Theorem \ref{pbw} are used.

(iv) is proven by induction on $c(a1b)$ and making use of part (i). The relevant hypothesis are
$\sigma_i(1)=1$ and $\delta_i(1)=0$ which are part of the hypothesis (1) in Theorem \ref{pbw}.

(v) This part is proven by induction on $c(a(rs)b)$ and making use of (iii). The relevant
hypothesis are $\sigma_i(ab)=\sigma_i(a)\sigma_i(b)$ and
$\delta_i(ab)=\sigma_i(a)\delta_i(b)+\delta_i(a)b$.
\end{proof}

\begin{proposition}\label{q-modulo-h}
 Let $y,z\in\mathbb{Z}\langle X\cup R\rangle$ and $a\in\mathbb{Z}T$. Then $h(yaz)=h(ytq(a)z)$.
\end{proposition}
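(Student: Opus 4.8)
The plan is to reduce the statement to the additivity and multiplicativity properties of $h$ on scalars already recorded in Proposition \ref{suma-y-producto-de-escalares}. First I would note that $h$ is $\mathbb{Z}$-linear (being $qp$ with $p$ the linear extension and $q$ bilinear) and that multiplication in $\mathbb{Z}\langle X\cup R\rangle$ is $\mathbb{Z}$-bilinear; hence, writing $y$ and $z$ as $\mathbb{Z}$-combinations of words of $W$, it suffices to prove $h(uav)=h(u\,tq(a)\,v)$ for single words $u,v\in W$ and $a\in\mathbb{Z}T$.

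Next I would treat the case where $a=w$ is a single element of $T$, say $w=r_1\cdots r_m x_{i_1}\cdots x_{i_n}$ with $i_1\leq\cdots\leq i_n$ and $r_1,\dots,r_m\in R$. Put $s:=r_1\cdots r_m\in R$ (the empty product being $1$ when $m=0$); then by definition $tq(w)=s\,x_{i_1}\cdots x_{i_n}$ if $s\neq 0$ and $tq(w)=0$ if $s=0$. The identity $h(uwv)=h(u\,tq(w)\,v)$ now follows by collapsing the block of scalars $r_1\cdots r_m$ into the single scalar $s$: I would apply Proposition \ref{suma-y-producto-de-escalares}(v) repeatedly to merge adjacent scalars (when $m\geq 2$), use part (iv) to insert the unit $1$ in the case $m=0$, and use part (i) to dispose of the case $s=0$.

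Then I would extend to arbitrary $a\in\mathbb{Z}T$ with $u,v$ fixed. Writing $a=\sum_j n_j w_j$ as a $\mathbb{Z}$-combination of $T$-words, $\mathbb{Z}$-linearity of $h$ together with the previous step gives $h(uav)=\sum_j n_j h(u\,tq(w_j)\,v)$. It remains to identify this sum with $h(u\,tq(a)\,v)$. Here lies the one delicate point: $tq$ is not additive as a map into the free algebra, because $q$ adds (in $R$) the scalars of those $T$-words sharing the same standard monomial, and $t$ then re-expresses each monomial with a single scalar. Grouping $q(a)=\sum_j n_j q(w_j)$ by standard monomial and using Proposition \ref{suma-y-producto-de-escalares}(i)--(iii) to split each combined scalar back apart inside $h$ reconciles the two expressions, yielding $h(u\,tq(a)\,v)=\sum_j n_j h(u\,tq(w_j)\,v)$ and hence the claim.

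The main obstacle is precisely this last reconciliation: one must not confuse $\mathbb{Z}$-linearity of $h$ (which treats words as a basis) with additivity in a single scalar slot (part (iii)), and it is only the latter that permits the scalar-collapsing performed by $tq$ to be undone after applying $h$. Everything else is a routine induction on complexity already packaged inside Proposition \ref{suma-y-producto-de-escalares}.
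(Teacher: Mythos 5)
Your proposal is correct and follows essentially the same route as the paper: both arguments reduce the identity to Proposition \ref{suma-y-producto-de-escalares}, collapsing the scalar block of each $T$-word via parts (iv) and (v) and then reconciling the merging of scalars performed by $q$ across $T$-words with the same standard monomial via parts (i)--(iii) and the $\mathbb{Z}$-linearity of $h$. The only difference is direction of travel (you build up from single words to general $a$, while the paper expands $h(ytq(a)z)$ back into $h(yaz)$), which is immaterial.
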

\begin{proof}
This is because we can obtain $tq(a)$ from $a$ with a finite number of operations described in
Proposition \ref{suma-y-producto-de-escalares}. Indeed if $a\in \mathbb{Z}T$ then by definition of
$T$ we heave $a=\Sigma n_{u}u$ where the sum is over $u\in T$, $n_u\in \mathbb{Z}$ and
$u=r_{1,u}\dots r_{m,u}x_{j_1}\dots x_{j_k}$ ($j_1,\dots j_k$ and $m,k$ depend on $u$) here
$r_{s}\in R$ and $1\leq j_1\leq\dots\leq j_k\leq n$. Then by definition of $t,q$ we have
$tq(a)=\Sigma_{x\in A} a(x)x$ where $A=\{x\in Mon(X):a(x)\neq 0\}$, and $a(x)=\Sigma_{u\in
B(x)}n_u\Pi_{s}r_{s,u}\in R$ where $B(x)=\{u\in T: x_{j_1}\dots x_{j_k}=x\}$. Using the Proposition
\ref{suma-y-producto-de-escalares} (i) we obtain that $h(ytq(a)z)=h(y\Sigma_{x\in Mon(X)} a(x)xz)$.
Using that $h$ is linear we get $h(y\Sigma_{x\in Mon(X)} a(x)xz)=\Sigma_{x\in Mon(x)}h(ya(x)xz)$.
Using Proposition \ref{suma-y-producto-de-escalares} (i),(ii),(iii) we get that
$h(ya(x)xz)=\Sigma_{u\in B(x)}n_uh(y(\Pi_{s}r_{s,u})xz)$. Using Proposition
\ref{suma-y-producto-de-escalares} (iv)(v) we get that $h(y(\Pi_{s}r_{s,u})xz)=h(yr_{1,u}\dots
r_{m,u}xz)=h(yuz)$.
\end{proof}
\begin{proposition}\label{independencia}
 If $x,y,z\in\mathbb{Z}\langle X\cup R\rangle$ then
 $h(xp(y)z)=h(xyz)$.
\end{proposition}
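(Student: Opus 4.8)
The plan is to prove $h(xp(y)z)=h(xyz)$ by reducing it, through the definition of $p$, to the case in which $y$ is a single reducible factor, and then to settle that case by a second induction exploiting the well order on complexities of Definition \ref{complejidad}. Since $h$ and $p$ are linear, I would first take $x,y,z\in W$ to be words and induct on the complexity $c(y)$. If $y\in T$ then $p(y)=y$ and there is nothing to prove. If $y\notin T$, then $y$ has a rightmost reducible factor and the corresponding clause (2) or (3) of Definition \ref{funcion-reduccion} yields a decomposition $y=v_1\rho v_2$ together with $p(y)=\Sigma_m p(v_1\rho_m v_2)$, where $p(\rho)=\Sigma_m\rho_m$ records the one-step reduction of the factor $\rho$ and each $y_m:=v_1\rho_m v_2$ has $c(y_m)<c(y)$. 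Using linearity of $h$, then the induction hypothesis applied to each $y_m$, then linearity again, one gets $h(xp(y)z)=\Sigma_m h(xp(y_m)z)=\Sigma_m h(xy_mz)=h(x\,v_1p(\rho)v_2\,z)$. Thus the statement collapses to the single-factor identity $h(A\,\rho\,B)=h(A\,p(\rho)\,B)$ for arbitrary words $A,B$ (here $A=xv_1$, $B=v_2z$).

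Next I would prove this single-factor identity for the two kinds of reducible factors, $\rho=x_ir$ and $\rho=x_jx_i$ with $i<j$, by induction on the complexity of the right context (on $c(B)$ in the first case, on $c(x_iB)$ in the second). The base case is the configuration in which $\rho$ is already the rightmost reducible factor of $A\rho B$, namely $rB\in T$ in the first case and $x_iB\in T$ in the second; there the relevant clause of Definition \ref{funcion-reduccion} computes $p(A\rho B)$ by reducing exactly at $\rho$, and applying $q$ gives $h(A\rho B)=h(A\,p(\rho)\,B)$ immediately. For the inductive step, when the rightmost reducible factor of the right context lies strictly to the right of $\rho$, that factor occurs identically in $A\rho B$ and in every term of $A\,p(\rho)\,B$; reducing it on both sides via the definition of $p$ lowers the right-context complexity, and the induction hypothesis closes the case. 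For $\rho=x_ir$ no reducible factor can overlap the trailing scalar $r$, so this clean induction already finishes that case.

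The main obstacle is the remaining configurations for $\rho=x_jx_i$, where the next reduction overlaps $\rho$ itself: these are exactly the critical pairs of the system. They occur when $B$ begins with a scalar, producing the pattern $x_jx_ir$, and when $B$ begins with a variable $x_l$ with $l<i$, producing (after renaming) the pattern $x_kx_jx_i$. In these two cases the two available first reductions do not factor through a common strictly smaller right context, and the equality of their $h$-values is not formal: it is precisely the content of hypotheses (2) and (3) of Theorem \ref{pbw}, that is, $h(x_jx_ir)=h(p(x_jx_i)r)$ and $h(x_kx_jx_i)=h(p(x_kx_j)x_i)$. I would invoke these identities at the overlap, absorbing the $T$-tail of $B$ through Proposition \ref{q-modulo-h} and handling the surrounding scalars by Proposition \ref{suma-y-producto-de-escalares}, while the induction hypothesis disposes of the strictly smaller remainders.

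I expect the bookkeeping in these overlap cases to be the delicate point, since one must descend from the abstract patterns $x_jx_ir$ and $x_kx_jx_i$ to the concrete words $A\,x_jx_i\,rB'$ and $A\,x_kx_jx_i\,B'$ while keeping the reductions on the two sides aligned. The well order of Definition \ref{complejidad} guarantees that the reduction $p$ terminates, so that, in the spirit of Newman's lemma, local resolution of these two critical pairs by hypotheses (2) and (3) is enough to force the global invariance of $h$ under $p$, completing Proposition \ref{independencia}.
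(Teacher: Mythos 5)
Your plan is essentially the paper's own argument: both are confluence arguments that induct over the well order of Definition \ref{complejidad}, locate the rightmost inversion, observe that the only non-formal configurations are the two overlaps $x_jx_ir$ and $x_kx_jx_i$, and resolve these by hypotheses (2) and (3) of Theorem \ref{pbw}, transported into an arbitrary context via Proposition \ref{q-modulo-h} with Proposition \ref{suma-y-producto-de-escalares} absorbing the scalars. The difference is organizational: the paper runs a single induction on $c(xyz)$ and splits into five cases according to where the rightmost inversion of the whole word sits relative to the factorization $x\mid y\mid z$, whereas you first induct on $c(y)$ to reduce to the one-step identity $h(A\rho B)=h(A\,p(\rho)\,B)$ and then run a second induction on the right context of $\rho$. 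The one point to tighten is the measure of that second induction: in the overlap cases, the unwinding required before hypothesis (2) or (3) can be invoked generates further calls to the one-step identity for new factors (for instance the factor $x_j\sigma_i(r)$ with right context $x_iB'$, arising after the first reduction of $A\,x_jx_i\,rB'$), and you must verify that your measure strictly decreases on each such call; it does if a type-(iii) instance is measured by $c(x_iB)$ as you propose (or, more robustly, if every instance is measured by the complexity of the entire word being reduced, which is exactly the paper's $c(xyz)$ and under which every one-step reduction is strictly decreasing). With that verification your sketch carries through and proves the proposition.
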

\begin{proof}
The identity is linear in $x,y,z$, so we may assume they are words. Next we proceed by induction on
$c(xyz)$.
First assume that the first inversion from right to left
in $xyz$ is in $y$, say $y=w_{1}x_{j}sw_{2}$ with $s=x_i$ with $i<j$ or $s\in R$, and $sw_2\in T$. Then
 $h(xyz)=h(xw_{1}p(x_{j}s)w_{2}z)=h(xp(w_{1}p(x_{j}s)w_{2})z)=h(xp(y)z)$ by the definition of $p$ and induction.\\
Now assume that the first inversion of $xyz$ is not contained in $yz$, or $xyz\in T$, in this case $y\in T$ and $p(y)=y$.\\
Now assume that the first inversion of $xyz$ is contained in $z$ say $z=w_1x_jsw_2$ with $sw_2\in T$ and $s=x_i$ with $i<j$ or $s\in R$.
Then $h(xyz)=h(xyw_1p(x_js)w_2)=h(xp(y)w_1p(x_js)w_2)=h(xp(y)z)$ by definition of $h$ and induction.\\
Now assume that the first inversion of $xyz$ has a part in $y$ and a part in $z$, say $y=y'x_j$ and $z=sz'$ with $z\in T$ and $s=x_i$ with $i<j$ or $s\in R$.
Assume further that the first inversion of $y$ exists and is contained in $y'$, say $y'=w_1x_ks'w_2$ with $s'w_2\in T$ an $s'=x_i$ with $i<k$ or $s'\in R$.
Then $h(xyz)=h(xy'p(x_js)z')=h(xp(y')p(x_js)z')=h(xp(w_1p(x_ks')w_2)p(x_js)z')=h(xw_1p(x_ks')w_2p(x_js)z')=h(xw_1p(x_ks')w_2x_jsz')=h(xp(w_1p(x_ks')w_2x_j)sz')=
h(xp(y)z)$ by definition of $h$ and induction applied alternatively.\\
So the last case is $y=y'x_kx_j$ with $k>j$ and $z=sz'$ with $s=x_i$ with $i<j$ or $s\in R$ and $z\in T$.
In this case
 $h(xyz)=h(xy'x_{k}p(x_{j}s)z')=h(xy'p(x_{k}p(x_{j}s))z')$ by definition of $h$ and induction, also observe
$h(xy'p(x_kp(x_js))z')=h(xy'p(p(x_{k}x_{j})s)z')$ because $qp(p(x_kx_j)s)=qp(x_kp(x_js)$ by hipothesis (2) and (3) in Theorem \ref{pbw}, and
also by Proposition \ref{q-modulo-h}. Also $h(xy'p(p(x_kx_j)s)z')=h(xy'p(x_kx_j)sz')=h(xp(y'p(x_kx_j))z)$ by induction applied twice,
and $h(xp(y'p(x_kx_j))z)=h(xp(y)z)$ by definition of $p$, as required.
\end{proof}
\begin{lemma}\label{h-es-homo}
 $h(ab)=h(a)\star h(b)$, for $a,b\in\mathbb{Z}\langle X\cup R\rangle$.
\end{lemma}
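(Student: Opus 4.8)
The plan is to prove the identity by chaining the three preliminary propositions, with no further induction required: since Propositions \ref{independencia} and \ref{q-modulo-h} are already stated for arbitrary elements of $\mathbb{Z}\langle X\cup R\rangle$ (and $\mathbb{Z}T$), the argument applies directly to general $a,b$ without first reducing to words. The starting observation is that unfolding the definition of $\star$ gives $h(a)\star h(b)=h(t(h(a))\,t(h(b)))$, and since $h=qp$ this equals $h(tqp(a)\,tqp(b))$. So the goal is to transform $h(ab)$ into $h(tqp(a)\,tqp(b))$ through a short sequence of equalities.

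First I would use Proposition \ref{independencia} twice to introduce $p$ on both factors. Taking $x=1$, $y=a$, $z=b$ yields $h(ab)=h(p(a)\,b)$, and then taking $x=p(a)$, $y=b$, $z=1$ yields $h(p(a)\,b)=h(p(a)\,p(b))$. At this stage both factors lie in $\mathbb{Z}T$, which is exactly the hypothesis needed to invoke Proposition \ref{q-modulo-h}.

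Next I would replace each $p$-reduced factor by its image under $tq$. Applying Proposition \ref{q-modulo-h} with $y=1$ and $z=p(b)$ to the left factor $p(a)\in\mathbb{Z}T$ gives $h(p(a)\,p(b))=h(tq(p(a))\,p(b))=h(tqp(a)\,p(b))$; applying it again with $y=tqp(a)$ and $z=1$ to the right factor $p(b)\in\mathbb{Z}T$ gives $h(tqp(a)\,p(b))=h(tqp(a)\,tq(p(b)))=h(tqp(a)\,tqp(b))$. Recognizing that $tqp=t\circ h$, the right-hand side is precisely $h(t(h(a))\,t(h(b)))=h(a)\star h(b)$, which closes the chain.

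The computation itself is brief, so the only point demanding care is the type discipline: one must confirm that $p(a)$ and $p(b)$ genuinely lie in $\mathbb{Z}T$ (guaranteed because the image of $p$ is contained in $\mathbb{Z}T$ by Definition \ref{funcion-reduccion}) so that Proposition \ref{q-modulo-h} legitimately applies, and that $t(h(a))=tqp(a)$. The substantive difficulty has already been discharged upstream: Proposition \ref{independencia} is where the hypotheses (2) and (3) of Theorem \ref{pbw} — the associativity/overlap conditions on the cubic and quadratic-times-scalar words — were consumed, while Proposition \ref{q-modulo-h} packages the scalar-arithmetic compatibilities of Proposition \ref{suma-y-producto-de-escalares}. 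Given those, this lemma is essentially a formal consequence, and I expect no genuine obstacle beyond correctly tracking the four substitutions.
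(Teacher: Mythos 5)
Your proof is correct and is essentially the paper's own argument: the paper writes the same chain $h(a)\star h(b)=h(tqp(a)tqp(b))=h(p(a)p(b))=h(ab)$, citing Proposition \ref{q-modulo-h} twice and Proposition \ref{independencia} twice, exactly as you do (merely read in the opposite direction). Your extra care about $p(a),p(b)\in\mathbb{Z}T$ and the identity $tqp=t\circ h$ is a fair elaboration of the same steps.
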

\begin{proof}
$h(a)\star h(b)=h(tqp(a)tqp(b))=h(p(a)p(b))=h(ab)$, the first equality is from the definition of
$\star$, the second equality is from Proposition \ref{q-modulo-h} twice
 and the third equality is Proposition
\ref{independencia} twice.
\end{proof}

\section{The universal property}

In this section we will prove the main theorem about the characterization of skew $PBW$ extensions
by a universal property in a similar way as this is done for skew polynomial rings. This problem
was studied in \cite{Acosta} where skew $PBW$ extensions were generalized to infinite sets of
generators.

\begin{theorem}[Main theorem: The universal property]\label{122}
Let $A=\sigma(R)\langle x_1,\dots,x_n\rangle$ be a skew $PBW$ extension with parameters
$\sigma_i,\delta_i, c_{ij}, d_{ij}, a_{ij}^{(k)}$, $1\leq i,j\leq n$. Let $B$ be a ring with
homomorphism $\varphi:R\to B$ and elements $y_1,\dots,y_n\in B$ such that
\begin{enumerate}
\item[\rm (i)]$y_i\varphi(r)=\varphi(\sigma_i(r))y_i+\varphi(\delta_i(r))$, for every $r\in R$.
\item[\rm (ii)]$y_jy_i=\varphi(c_{ij})y_iy_j+\varphi(a_{ij}^{(1)})y_1+\cdots
+\varphi(a_{ij}^{(n)})y_n+d_{ij}$.
\end{enumerate}
Then, there exists an unique ring homomorphism $\widetilde{\varphi}:A\to B$ such that
$\widetilde{\varphi}\iota=\varphi$ and $\widetilde{\varphi}(x_i)=y_i$, where $\iota$ is the
inclusion of $R$ in $A$.
\end{theorem}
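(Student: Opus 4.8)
We need to prove the universal property of skew PBW extensions. Given $A = \sigma(R)\langle x_1, \dots, x_n\rangle$ and a target ring $B$ with $\varphi: R \to B$ and elements $y_i \in B$ satisfying the compatibility conditions (i) and (ii), we must produce a unique ring homomorphism $\tilde\varphi: A \to B$ extending $\varphi$ and sending $x_i \mapsto y_i$.

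**Key insight.** The natural approach is to use the Existence theorem (Theorem \ref{pbw}) machinery. The free $R$-module $A$ has basis $\text{Mon}(A)$. Uniqueness is easy: any ring homomorphism $\tilde\varphi$ with $\tilde\varphi|_R = \varphi$ and $\tilde\varphi(x_i) = y_i$ is forced on each monomial $x^\alpha = x_1^{\alpha_1} \cdots x_n^{\alpha_n}$ to be $y_1^{\alpha_1} \cdots y_n^{\alpha_n}$, hence forced on all of $A$ by $R$-linearity. So $\tilde\varphi(\sum_\alpha c_\alpha x^\alpha) := \sum_\alpha \varphi(c_\alpha) y_1^{\alpha_1} \cdots y_n^{\alpha_n}$ is the only candidate.

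**Structure of the proof.** The real content is showing this candidate is multiplicative. The cleanest route reuses the $\mathbb{Z}\langle X \cup R\rangle$ framework. Define a ring homomorphism $\Phi: \mathbb{Z}\langle X \cup R\rangle \to B$ on generators by $\Phi(x_i) = y_i$ and $\Phi(r) = \varphi(r)$. Since $\mathbb{Z}\langle X \cup R\rangle$ is the free $\mathbb{Z}$-algebra on the alphabet $X \cup R$, $\Phi$ exists and is a ring homomorphism. The key claim is that $\Phi$ factors through $h: \mathbb{Z}\langle X \cup R\rangle \to F_R(\text{Mon}) \cong A$, i.e. $\Phi = \tilde\varphi \circ h$. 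This reduces to showing $\Phi(w) = \Phi(p(w))$ for every word $w$ (so that $\Phi$ kills the difference $w - p(w)$), which one proves by induction on complexity $c(w)$, exactly mirroring the definition of $p$ and using hypotheses (i) and (ii) at the two reduction steps. Once $\Phi = \tilde\varphi \circ h$ is established, multiplicativity of $\tilde\varphi$ follows from that of $\Phi$ together with the surjectivity and multiplicativity of $h$ proved in Lemma \ref{h-es-homo}: for $f, g \in A$ write $f = h(a)$, $g = h(b)$, then $\tilde\varphi(fg) = \tilde\varphi(h(a) \star h(b)) = \tilde\varphi(h(ab)) = \Phi(ab) = \Phi(a)\Phi(b) = \tilde\varphi(h(a))\tilde\varphi(h(b)) = \tilde\varphi(f)\tilde\varphi(g)$.

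**Main obstacle.** The hard part is verifying the factorization $\Phi(w) = \Phi(p(w))$ at the reduction steps, i.e. checking that the defining relations of $p$ (moving scalars past $x_i$ via $\sigma_i, \delta_i$, and swapping $x_j x_i$ for $i < j$) are respected by $\Phi$. This is precisely where hypotheses (i) and (ii) on $y_i$ are consumed: condition (i) handles the scalar-transposition rule $p(v_1 x_i r v_2) = p(v_1 \sigma_i(r) x_i v_2) + p(v_1 \delta_i(r) v_2)$, and condition (ii) handles the variable-swap rule. The induction on complexity is routine once these base relations are matched against (i) and (ii); the delicate point is confirming that $\Phi$ is well-defined as a $\mathbb{Z}$-algebra map even though $R$ is noncommutative — but this is fine because $\mathbb{Z}\langle X \cup R\rangle$ treats the elements of $R$ merely as an alphabet, and $\Phi$ sends products of letters to products in $B$, with the compatibility $\Phi(rs) = \varphi(rs) = \varphi(r)\varphi(s) = \Phi(r)\Phi(s)$ guaranteed by $\varphi$ being a ring homomorphism. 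I expect the write-up to lean heavily on Propositions \ref{suma-y-producto-de-escalares} and \ref{independencia} to streamline these verifications.
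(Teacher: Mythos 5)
Your proposal is correct in outline, but it takes a genuinely different route from the paper. The paper defines the same candidate $\widetilde{\varphi}$ on the $R$-basis ${\rm Mon}(A)$ and proves multiplicativity directly by induction on $|\alpha+\beta|$, using the identities $x^{\alpha}r=\sigma^{\alpha}(r)x^{\alpha}+p_{\alpha,r}$ and $x^{\alpha}x^{\beta}=c_{\alpha,\beta}x^{\alpha+\beta}+p_{\alpha,\beta}$ of Theorem \ref{coefficientes} together with their images under $\varphi$ in $B$ (which follow from (i) and (ii) by the same induction); it never returns to the free-algebra machinery of Section 2. You instead factor the evaluation map $\Phi:\mathbb{Z}\langle X\cup R\rangle\to B$ through $h$, which is conceptually cleaner (it exhibits $A$ as the universal ring satisfying the reduction relations and reuses Propositions \ref{suma-y-producto-de-escalares}--\ref{independencia} and Lemma \ref{h-es-homo} wholesale), and your induction on complexity for $\Phi(w)=\Phi(p(w))$ is exactly where (i) and (ii) are consumed, as you say. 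The one step you pass over too quickly is the identification $A\cong(F_R(Mon),\star)$: the theorem is stated for an arbitrary skew $PBW$ extension $A$ with the given parameters, so to transport multiplicativity of $\widetilde{\varphi}$ from the $\star$-product to the actual product of $A$ you must check that the evaluation homomorphism $e:\mathbb{Z}\langle X\cup R\rangle\to A$ (sending $x_i\mapsto x_i$ and $r\mapsto r$) also satisfies $e(w)=e(p(w))$ and factors through $h$ via the $R$-module isomorphism $F_R(Mon)\to A$; this is the same induction you already run for $\Phi$ (with $\varphi=\iota$, $y_i=x_i$), so it is not a gap that would sink the argument, but without it the appeal to Lemma \ref{h-es-homo} only yields multiplicativity for the ring constructed in Theorem \ref{pbw}, not for $A$ itself. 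The trade-off: the paper's route is shorter and self-contained in Section 3 but compresses two auxiliary inductions into one display; yours is longer to set up but makes every reduction explicit and generalizes more readily (e.g.\ to the infinite-variable setting of \cite{Acosta}).
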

\begin{proof}
Since $A$ is a free $R$-module with basis $Mon(A)$, we define the $R$-homomorphism
\begin{center}
$\widetilde{\varphi}:A\to B$, $r_1x^{\alpha_1}+\cdots+a_tx^{\alpha_t}\mapsto
\varphi(r_1)y^{\alpha_1}+\cdots+\varphi(a_t)y^{\alpha_t}$,
\end{center}
where $y^{\theta}:=y_1^{\theta_1}\cdots y_n^{\theta_n}$, with $\theta:=(\theta_1,\dots,\theta_n)\in
\mathbb{N}^n$. Note that $\widetilde{\varphi}(1)=1$.

$\widetilde{\varphi}$ is multiplicative: In fact, applying induction on the degree $|\alpha+\beta|$
we have
\begin{center}
$\widetilde{\varphi}(ax^{\alpha}bx^{\beta})=\widetilde{\varphi}(a[\sigma^{\alpha}(b)x^\alpha
x^\beta+p_{\alpha,b}x^\beta])=\widetilde{\varphi}[a\sigma^\alpha(b)[c_{\alpha,\beta}x^{\alpha+\beta}+p_{\alpha,\beta}]+ap_{\alpha,b}x^\beta]
=\varphi(a)\varphi(\sigma^{\alpha}(b))\varphi(c_{\alpha,\beta})y^{\alpha+\beta}+\varphi(a)\varphi(\sigma^{\alpha}(b))\varphi(p_{\alpha,\beta})(y)
+\varphi(a)\varphi(p_{\alpha,b})(y)y^\beta$,
\end{center}
where $\varphi(p_{\alpha,\beta})(y)$ is the element in $B$ obtained replacing each monomial
$x^\theta$ in $p_{\alpha,\beta}$ by $y^\theta$ and every coefficient $c$ by $\varphi(c)$. In a
similar way we have for $\varphi(p_{\alpha,b})(y)$ (observe that the degree of each monomial of
$p_{\alpha,b}x^\beta$ is $<|\alpha+\beta|$). On the other hand, applying (i) and (ii) we get
\begin{center}
$\widetilde{\varphi}(ax^{\alpha})\widetilde{\varphi}(bx^{\beta})=\varphi(a)y^\alpha\varphi(b)y^\beta=
\varphi(a)[\varphi(\sigma^{\alpha}(b))y^\alpha+\varphi(p_{\alpha,b})(y)]y^\beta=\varphi(a)\varphi(\sigma^\alpha(b))y^\alpha
y^\beta+\varphi(a)\varphi(p_{\alpha,b})(y)y^\beta=\varphi(a)\varphi(\sigma^\alpha(b))[\varphi(c_{\alpha,\beta})y^{\alpha+\beta}+\varphi(p_{\alpha,\beta})(y)]+
\varphi(a)\varphi(p_{\alpha,b})(y)y^\beta=\varphi(a)\varphi(\sigma^{\alpha}(b))\varphi(c_{\alpha,\beta})y^{\alpha+\beta}+\varphi(a)\varphi(\sigma^{\alpha}(b))\varphi(p_{\alpha,\beta})(y)
+\varphi(a)\varphi(p_{\alpha,b})(y)y^\beta$.
\end{center}

It is clear that $\widetilde{\varphi}\iota=\varphi$ and $\widetilde{\varphi}(x_i)=y_i$. Moreover,
note that $\widetilde{\varphi}$ is the only ring homomorphism that satisfy these two conditions.
\end{proof}

\begin{corollary}\label{2.3}
Let $R$ be a ring and $A=\sigma(R)\langle x_1,\dots,x_n\rangle$ be a skew $PBW$ extension of $R$
with parameters $\sigma_i,\delta_i, c_{ij}, d_{ij}, a_{ij}^{(k)}$, $1\leq i,j\leq n$. Let $B$ be a
ring with homomorphism $\varphi:R\to B$ and elements $y_1,\dots,y_n\in B$ such that the conditions
$($i$)$-$($ii$)$ in Theorem \ref{122} are satisfied with respect to the system of parameters
$\sigma_i,\delta_i, c_{ij}, d_{ij}, a_{ij}^{(k)}$, $1\leq i,j\leq n$, of the ring $R$. If $B$
satisfies the universal property, then $B\cong A=\sigma(R)\langle x_1,\dots,x_n\rangle$. Moreover,
the monomials $y_1^{\alpha_1}\cdots y_n^{\alpha_n}$, $\alpha_i\geq 0$, $1\leq i\leq n$ are a
$R$-basis of $B$.
\end{corollary}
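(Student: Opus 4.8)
The plan is to read Theorem \ref{122} as the assertion that $(A,\iota,x_1,\dots,x_n)$ is an initial object in the category whose objects are triples $(C,\psi,z_1,\dots,z_n)$ consisting of a ring $C$, a ring homomorphism $\psi\colon R\to C$, and elements $z_1,\dots,z_n\in C$ satisfying the analogues of (i)--(ii) for the given parameters, and whose morphisms are ring homomorphisms that commute with the structure maps from $R$ and send distinguished elements to distinguished elements. The hypothesis that $B$ also satisfies the universal property says exactly that $(B,\varphi,y_1,\dots,y_n)$ is a second initial object in the same category. Since initial objects are unique up to a unique isomorphism, $A\cong B$ will follow; the only work is to spell this out using nothing beyond the existence-and-uniqueness statement of Theorem \ref{122}.

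Concretely, I would first invoke Theorem \ref{122} to obtain the unique ring homomorphism $\widetilde{\varphi}\colon A\to B$ with $\widetilde{\varphi}\iota=\varphi$ and $\widetilde{\varphi}(x_i)=y_i$. To produce a map in the opposite direction I would apply the universal property of $B$ to the triple $(A,\iota,x_1,\dots,x_n)$. This requires checking that $A$ itself satisfies conditions (i)--(ii) with $\varphi$ replaced by $\iota$, which is immediate: relation (i) is the content of Proposition \ref{sigmadefinition} (namely $x_ir=\sigma_i(r)x_i+\delta_i(r)$), and relation (ii) is precisely the defining relation of the parameters $c_{ij},a_{ij}^{(k)},d_{ij}$ holding in $A$. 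Hence the universal property of $B$ furnishes a unique ring homomorphism $\psi\colon B\to A$ with $\psi\varphi=\iota$ and $\psi(y_i)=x_i$.

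The two composites are then forced to be identities. The map $\psi\widetilde{\varphi}\colon A\to A$ satisfies $(\psi\widetilde{\varphi})\iota=\iota$ and $(\psi\widetilde{\varphi})(x_i)=x_i$; but ${\rm id}_A$ satisfies the same two conditions, so the uniqueness clause of Theorem \ref{122} (applied with target $A$ itself, $\varphi=\iota$, $y_i=x_i$) gives $\psi\widetilde{\varphi}={\rm id}_A$. Symmetrically, $\widetilde{\varphi}\psi\colon B\to B$ fixes $\varphi$ and each $y_i$, so the uniqueness clause in the universal property of $B$ gives $\widetilde{\varphi}\psi={\rm id}_B$. Therefore $\widetilde{\varphi}$ is a ring isomorphism and $B\cong A$.

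For the final assertion I would view $B$ as a left $R$-module through $\varphi$, that is, $r\cdot b:=\varphi(r)b$, which is a module structure because $\varphi$ is a ring homomorphism. Since $\widetilde{\varphi}(ra)=\widetilde{\varphi}(r)\widetilde{\varphi}(a)=\varphi(r)\widetilde{\varphi}(a)=r\cdot\widetilde{\varphi}(a)$, the isomorphism $\widetilde{\varphi}$ is in particular an isomorphism of left $R$-modules, and hence carries the $R$-basis ${\rm Mon}(A)=\{x^{\alpha}\}$ of $A$ to an $R$-basis of $B$; that image is exactly $\{y^{\alpha}=y_1^{\alpha_1}\cdots y_n^{\alpha_n}\}$, as required. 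The only step that is not purely formal is the verification that $A$ meets the hypotheses of the universal property of $B$ so that the backward map $\psi$ exists; everything else is the standard uniqueness-of-initial-object argument, where the real care lies in matching up the uniqueness clauses correctly (in particular applying Theorem \ref{122} with $A$ itself as the target).
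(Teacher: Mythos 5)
Your proposal is correct and follows essentially the same route as the paper: both construct $\widetilde{\varphi}\colon A\to B$ from Theorem \ref{122}, obtain the inverse map from the universal property of $B$ applied to $(A,\iota,x_1,\dots,x_n)$, use the uniqueness clauses to show the composites are the identities, and then observe that $\widetilde{\varphi}$ is an $R$-module isomorphism carrying ${\rm Mon}(A)$ to the monomials in the $y_i$. Your write-up merely makes explicit two points the paper leaves implicit, namely that $A$ itself satisfies conditions (i)--(ii) with respect to $\iota$ and why $\widetilde{\varphi}$ is $R$-linear.
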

\begin{proof}
By the universal property of $A$ there exists $\widetilde{\varphi}$ such that
$\widetilde{\varphi}\iota=\varphi$; by the universal property of $B$ there exists
$\widetilde{\iota}$ such that $\widetilde{\iota}\varphi=\iota$. Note that
$\widetilde{\iota}\widetilde{\varphi}\iota=\iota$ and
$\widetilde{\varphi}\widetilde{\iota}\varphi=\varphi$. The uniqueness gives that
$\widetilde{\iota}\widetilde{\varphi}=i_A$ and $\widetilde{\varphi}\widetilde{\iota}=i_B$.
Moreover, in the proof of Theorem \ref{122} we observed that $\widetilde{\varphi}$ is not only a
ring homomorphism but also a $R$-homomorphism, whence
\begin{center}
$\widetilde{\varphi}(Mon(A))=\{y_1^{\alpha_1}\cdots y_n^{\alpha_n}|\alpha_i\geq 0,1\leq i\leq n\}$
\end{center}
is a $R$-basis of $B$.
\end{proof}

\begin{corollary}
Let $R$ be a ring and $A=\sigma(R)\langle x_1,\dots,x_n\rangle$ be a skew PBW extension of $R$ with
parameters $\sigma_i,\delta_i, c_{ij}, d_{ij}, a_{ij}^{(k)}$, $1\leq i,j\leq n$. Let $B$ be a ring
that satisfies the following conditions with respect to the system of parameters
$\sigma_i,\delta_i, c_{ij}, d_{ij}, a_{ij}^{(k)}$, $1\leq i,j\leq n$, of the ring $R$.
\begin{enumerate}
\item[\rm (i)]There exists a ring homomorphism $\varphi:R\to B$.
\item[\rm (ii)]There exist elements $y_1,\dots, y_n\in B$ such that $B$ is a left free $B$-module with basis $Mon(y_1,\dots,y_n)$, and the product
is given by $r\cdot b:=\varphi(r)b$, $r\in R, b\in B$.
\item[\rm (iii)]The conditions $($i$)$ and $($ii$)$ in Theorem \ref{122} hold.
\end{enumerate}
Then $B\cong A=\sigma(R)\langle x_1,\dots,x_n\rangle$.
\end{corollary}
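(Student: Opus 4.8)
The plan is to use the universal property directly: Theorem \ref{122} produces the comparison homomorphism $\widetilde{\varphi}\colon A\to B$, and then I would show that the concrete free-module hypothesis (ii) forces this comparison map to be a bijection, hence a ring isomorphism. This is the same strategy as Corollary \ref{2.3}, except that instead of assuming that $B$ itself satisfies the universal property I exploit the explicit $R$-basis of $B$ to check bijectivity by hand.

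First I would apply Theorem \ref{122} to the data $\varphi\colon R\to B$ and $y_1,\dots,y_n\in B$, which by hypothesis (iii) satisfy conditions (i)--(ii) of that theorem. This yields a unique ring homomorphism $\widetilde{\varphi}\colon A\to B$ with $\widetilde{\varphi}\iota=\varphi$ and $\widetilde{\varphi}(x_i)=y_i$. I would then recall from the construction of $\widetilde{\varphi}$ in the proof of Theorem \ref{122} that it is in fact $R$-linear, where $B$ is regarded as a left $R$-module through the action $r\cdot b:=\varphi(r)b$ of hypothesis (ii); indeed $\widetilde{\varphi}\bigl(\sum_i r_i x^{\alpha_i}\bigr)=\sum_i \varphi(r_i)y^{\alpha_i}=\sum_i r_i\cdot\widetilde{\varphi}(x^{\alpha_i})$.

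The key step is to observe that $\widetilde{\varphi}$ carries one $R$-basis onto another. By construction $\widetilde{\varphi}(x^{\alpha})=y^{\alpha}$ for every $\alpha\in\mathbb{N}^n$, so $\widetilde{\varphi}$ restricts to a map from $Mon(A)=\{x^{\alpha}\}$ onto $Mon(y_1,\dots,y_n)=\{y^{\alpha}\}$. Hypothesis (ii) states that $Mon(y_1,\dots,y_n)$ is a free $R$-basis of $B$; in particular its elements are $R$-linearly independent, hence pairwise distinct, so $\alpha\mapsto y^{\alpha}$ is injective and $\widetilde{\varphi}$ sends the basis $Mon(A)$ bijectively onto the basis $Mon(y_1,\dots,y_n)$. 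An $R$-linear map that restricts to a bijection between $R$-bases is an $R$-module isomorphism, so $\widetilde{\varphi}$ is in particular bijective; being also a ring homomorphism, it is a ring isomorphism, and $B\cong A$ follows.

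I do not expect a genuine obstacle here, since the statement is essentially a concrete repackaging of Corollary \ref{2.3}: the only point requiring a little care is that the free-module hypothesis (ii) is exactly what guarantees both that the target monomials $y^{\alpha}$ are distinct and that they span, which is what upgrades the ring homomorphism $\widetilde{\varphi}$ from Theorem \ref{122} to an isomorphism. As a sanity check, note that hypothesis (ii) already forces $\varphi$ to be injective (the empty monomial $1=y^0$ lies in the basis, so $r\cdot 1=\varphi(r)=0$ implies $r=0$), in agreement with the conclusion $B\cong A$, which embeds $R$ into $B$. Alternatively, one could first verify that $B$ satisfies the universal property and then invoke Corollary \ref{2.3} verbatim, but the direct basis comparison above is shorter.
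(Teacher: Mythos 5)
Your proposal is correct and follows essentially the same route as the paper: invoke Theorem \ref{122} to obtain $\widetilde{\varphi}\colon A\to B$ and then use hypothesis (ii) to conclude bijectivity. The paper states the last step in one line, whereas you spell out the (correct) reason -- $\widetilde{\varphi}$ is $R$-linear and carries the $R$-basis $Mon(A)$ bijectively onto the $R$-basis $Mon(y_1,\dots,y_n)$ -- but this is a fleshing-out of the same argument, not a different one.
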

\begin{proof}
According to the universal property of $A$, there exists a ring homomorphism
$\widetilde{\varphi}:A\to B$ given by $r_1x^{\alpha_1}+\cdots+a_tx^{\alpha_t}\mapsto
\varphi(r_1)y^{\alpha_1}+\cdots+\varphi(a_t)y^{\alpha_t}$; from (ii) we get that
$\widetilde{\varphi}$ is bijective.
\end{proof}

\section{The Poincaré-Birkhoff-Witt theorem}

Using the results of the previous sections, we will give now a new short proof of the
Poincaré-Birkhoff-Witt theorem about the bases of enveloping algebras of finite-dimensional Lie
algebras. Recall that if $K$ is a field and $\mathcal{G}$ is a Lie algebra with $K$-basis
$Y:=\{y_1,\dots,y_n\}$, the enveloping algebra of $\mathcal{G}$ is the associative $K$-algebra
$\mathcal{U}(\mathcal{G})$ defined by $\mathcal{U}(\mathcal{G})=K\{y_1,\dots,y_n\}/I$, where
$K\{y_1,\dots,y_n\}$ is the free $K$-algebra in the alphabet $Y$ and $I$ the two-sided ideal
generated by all elements of the form $y_jy_i-y_iy_j-[y_j,y_i]$, $1\leq i,j\leq n$, where $[\,
,\,]$ is the Lie bracket of $\mathcal{G}$ (see \cite{McConnell}).

\begin{theorem}[Poincaré-Birkhoff-Witt theorem]
The standard monomials $y_1^{\alpha_1}\cdots y_n^{\alpha_n}$, $\alpha_i\geq 0$, $1\leq i\leq n$,
conform a $K$-basis of $\mathcal{U}(\mathcal{G})$.
\end{theorem}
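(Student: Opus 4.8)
The plan is to realize the enveloping algebra $\mathcal{U}(\mathcal{G})$ as a skew $PBW$ extension of the base field $K$ and then invoke Corollary \ref{2.3} to transport the $PBW$ basis of the constructed extension onto $\mathcal{U}(\mathcal{G})$. Concretely, I would take $R:=K$ and read off the system of parameters from the defining relations of $\mathcal{U}(\mathcal{G})$. Since $K$ is central and the variables commute with scalars, I set $\sigma_i:=\mathrm{id}_K$ and $\delta_i:=0$ for every $i$; then condition (1) of Theorem \ref{pbw} holds trivially because the identity is a ring endomorphism and the zero map is a derivation over it, and moreover $\sigma_i(r)=r\neq 0$ for $r\neq 0$ as required in Definition \ref{funcion-reduccion}. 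For the commutation of variables I write $[y_j,y_i]=\sum_k \lambda_{ij}^{(k)}y_k$ in the basis $Y$ (with structure constants $\lambda_{ij}^{(k)}\in K$) and set $c_{ij}:=1$, $d_{ij}:=0$, and $a_{ij}^{(k)}:=\lambda_{ij}^{(k)}$ for $i<j$. Each $c_{ij}=1$ is left invertible, so all hypotheses of Definition \ref{funcion-reduccion} are met.

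Next I would verify hypotheses (2) and (3) of the Existence Theorem \ref{pbw} for this choice of parameters. Hypothesis (2), namely $h(x_jx_ir)=h(p(x_jx_i)r)$, reduces after unwinding the definitions to a statement that scalars from $K$ pass freely through the $y$'s; because $\sigma_i=\mathrm{id}$ and $\delta_i=0$, both sides collapse to the same element of $F_R(Mon)$, so (2) holds without using any Lie-theoretic input. Hypothesis (3), $h(x_kx_jx_i)=h(p(x_kx_j)x_i)$ for $i<j<k$, is where the Jacobi identity enters: rewriting $x_kx_jx_i$ to standard form by two different reduction orders must give the same normal form, and the discrepancy between the two orders is precisely the Jacobi relator $[[y_k,y_j],y_i]+[[y_j,y_i],y_k]+[[y_i,y_k],y_j]=0$ expressed through the structure constants $\lambda_{ij}^{(k)}$. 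I would carry out this computation by applying $p$ and $q$ to both sides and comparing the linear combinations of the $y_k$ that appear; antisymmetry $[y_j,y_i]=-[y_i,y_j]$ handles the bookkeeping for indices out of order, and the coefficient of each basis monomial matches exactly when the structure constants satisfy Jacobi. This is the main obstacle of the proof, since it is the only point where the defining property of a Lie algebra is genuinely needed.

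Granting (1)--(3), Theorem \ref{pbw} produces a skew $PBW$ extension $A=\sigma(K)\langle x_1,\dots,x_n\rangle$ of $K$ whose defining relations are exactly $x_ir=rx_i$ and $x_jx_i=x_ix_j+\sum_k \lambda_{ij}^{(k)}x_k$ for $i<j$, and by Definition \ref{gpbwextension}(ii) the standard monomials $x^\alpha$ form a $K$-basis of $A$. To finish I would apply Corollary \ref{2.3} with $B:=\mathcal{U}(\mathcal{G})$, $\varphi:K\to\mathcal{U}(\mathcal{G})$ the canonical inclusion of scalars, and $y_i$ the images of the generators. The relations (i) and (ii) of Theorem \ref{122} hold in $\mathcal{U}(\mathcal{G})$ by construction of the ideal $I$, so there is a ring homomorphism $\widetilde{\varphi}:A\to\mathcal{U}(\mathcal{G})$ sending $x^\alpha$ to $y^\alpha$.

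The remaining point is to see that $\mathcal{U}(\mathcal{G})$ itself satisfies the universal property, so that Corollary \ref{2.3} yields an isomorphism $A\cong\mathcal{U}(\mathcal{G})$. This follows from the standard universal property of the quotient of the free algebra: given any ring $C$ with a homomorphism $\psi:K\to C$ and elements $z_i\in C$ satisfying the analogues of (i)--(ii), the universal property of $K\{y_1,\dots,y_n\}$ gives a map sending $y_i\mapsto z_i$, and condition (ii) forces the generators of $I$ into the kernel, so the map factors through $\mathcal{U}(\mathcal{G})$ uniquely. Hence both $A$ and $\mathcal{U}(\mathcal{G})$ enjoy the same universal property, Corollary \ref{2.3} applies, and its final assertion gives that $\widetilde{\varphi}(Mon(A))=\{y_1^{\alpha_1}\cdots y_n^{\alpha_n}\mid \alpha_i\geq 0\}$ is a $K$-basis of $\mathcal{U}(\mathcal{G})$, which is the claim.
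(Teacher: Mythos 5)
Your proposal is correct and follows essentially the same route as the paper: realize $\mathcal{U}(\mathcal{G})$ via the Existence Theorem with parameters $\sigma_i=\mathrm{id}_K$, $\delta_i=0$, $c_{ij}=1$, $d_{ij}=0$ and the structure constants as the $a_{ij}^{(k)}$, verify that condition (3) of Theorem \ref{pbw} reduces to the Jacobi identity, and then transfer the monomial basis through Corollary \ref{2.3}. The only difference is that you spell out why $\mathcal{U}(\mathcal{G})$ itself satisfies the universal property (via the free algebra quotient), a point the paper leaves implicit.
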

\begin{proof}
For the ring $K$ we consider the following system of variables and parameters:
\begin{equation}\label{eq4.1}
X:=\{x_1,\dots,x_n\}, \sigma_i:=i_K, \delta_i:=0, c_{i,j}:=1, d_{ij}:=0,
[x_i,x_j]=a_{ij}^{(1)}x_1+\cdots+a_{ij}^{(n)}x_n, 1\leq i,j\leq n.
\end{equation}
We want to prove that conditions (1)-(3) in Theorem \ref{pbw} hold. Condition (1) trivially holds.
For (2) we have
\begin{center}
$h(x_jx_ir) = h(x_jrx_i) = h(rx_jx_i) = rx_ix_j + r[x_j,x_i]$;

$h(p(x_jx_i)r) = h(x_ix_jr)+h([x_j,x_i]r) = h(x_irx_j)+r[x_j,x_i] = rx_ix_j+r[x_j,x_i]$.
\end{center}

Condition (3) of Theorem \ref{pbw} also holds: In fact,

\begin{center}
$h(p(x_kx_j)x_i)=h(x_jx_kx_i)+h([x_k,x_j]x_i)=h(x_jx_ix_k)+h(x_j[x_k,x_i])+h([x_k,x_j]x_i)=x_ix_jx_k+h([x_j,x_i]x_k)+h(x_j[x_k,x_i])+h([x_k,x_j]x_i)=
x_ix_jx_k+(h(x_k[x_j,x_i])+h([[x_j,x_i],x_k]))+
(h([x_k,x_i]x_j)+h([x_j,[x_k,x_i]]))+(h(x_i[x_k,x_j])+h([[x_k,x_j],x_i]))=
h(x_kx_jx_i)+
h([[x_j,x_i],x_k]+[x_j,[x_k,x_i]]+[[x_k,x_j],x_i])=h(x_kx_jx_i)$.
\end{center}
The last equality holds by the Jacobi identity, the second to the last equality follows regrouping
the terms and applying the definition of $h$ to $h(x_kx_jx_i)$.

From Theorem \ref{pbw} we conclude that there exists a skew $PBW$ extension $A=\sigma(K)\langle
x_1,\dots,x_n\rangle$ that satisfies (\ref{eq4.1}), in particular, the monomials
$x_1^{\alpha_1}\cdots x_n^{\alpha_n}$, $\alpha_i\geq 0$, $1\leq i\leq n$, conform a $K$-basis of
$A$. But note that $\mathcal{U}(\mathcal{G})$ satisfies the hypothesis in Corollary \ref{2.3}, so
$\mathcal{U}(\mathcal{G})\cong A$ and $\mathcal{U}(\mathcal{G})$ has $K$-basis
$y_1^{\alpha_1}\cdots y_n^{\alpha_n}$, $\alpha_i\geq 0$, $1\leq i\leq n$.

\end{proof}

%\begin{center}
%\textbf{Acknowledgements}
%\end{center}
%The authors are grateful to the editors and the referee for valuable suggestions and corrections.

%%%%%%%%%%%%%%%%%%%%%%%%%%%%%%%%%%%%%%%%%%%%%%%%%
%%%%%%%%%%%%%%%%%%%%%%%%%%%%%%%%%%%%%%%%%%%%%%%%%

\end{document}